\begin{document}

\newcommand{\s}{\sigma}
\renewcommand{\k}{\kappa}
\newcommand{\p}{\partial}
\newcommand{\D}{\Delta}
\newcommand{\om}{\omega}
\newcommand{\Om}{\Omega}
\renewcommand{\phi}{\varphi}
\newcommand{\e}{\epsilon}
\renewcommand{\a}{\alpha}
\renewcommand{\b}{\beta}
\newcommand{\N}{{\mathbb N}}
\newcommand{\R}{{\mathbb R}}
   \newcommand{\eps}{\varepsilon}
   \newcommand{\EX}{{\Bbb{E}}}
   \newcommand{\PX}{{\Bbb{P}}}

\newcommand{\cF}{{\cal F}}
\newcommand{\cG}{{\cal G}}
\newcommand{\cD}{{\cal D}}
\newcommand{\cO}{{\cal O}}

\newcommand{\grad}{\nabla}
\newcommand{\n}{\nabla}
\newcommand{\curl}{\nabla \times}
\newcommand{\dive}{\nabla \cdot}

\newcommand{\ddt}{\frac{d}{dt}}
\newcommand{\la}{{\lambda}}

\newcommand{\pp}{\frac{\partial}{\partial t}}
\newcommand{\ppn}{\frac{\partial}{\partial n}}

\newcommand{\F}{{\mathcal{F}}}
\newcommand{\B}{{\mathcal{B}}}
\newcommand{\cN}{{\mathcal{N}}}

\newtheorem{theorem}{Theorem}
\newtheorem{lemma}{Lemma}
\newtheorem{remark}{Remark}
\newtheorem{example}{Example}
\newtheorem{definition}{Definition}

\title{Predictability in Spatially Extended Systems
\\with Model Uncertainty }

\author{Jinqiao Duan \\
Department of Applied Mathematics \\
Illinois Institute of Technology \\
Chicago, IL 60616, USA \\ E-mail:  \emph{duan@iit.edu}   }

  \date{March 26, 2009 (Revised version) }

\maketitle

\begin{abstract}
Macroscopic models for spatially extended systems under random
influences are often described   by stochastic partial
differential equations (SPDEs).
 Some   techniques for understanding solutions of such equations, such as
 estimating correlations, Liapunov exponents and impact of noises, are discussed.
 They are relevant for understanding
  predictability in spatially extended  systems with model uncertainty,  for example, in physics,
  geophysics and biological sciences. The presentation is for a wide audience.

\medskip
{\bf Key Words:} Stochastic partial differential equations
(SPDEs),  correlation, Liapunov exponents, predictability,
uncertainty, invariant manifolds, impact of noise
\medskip

 {\bf Mathematics Subject Classifications (2000)}: 60H15, 35R60,
 60H30, 37H10

\end{abstract}

 \tableofcontents


\section{Motivation}

Scientific and engineering systems are often subject to
uncertainty or random  influence. Randomness can have delicate
impact on the overall evolution of such  systems, for example,
stochastic bifurcation \cite{CarLanRob01}, stochastic resonance
\cite{Imkeller},
 and  noise-induced pattern formation \cite{Gar}.
Taking stochastic effects into account is of central importance
for the  development of mathematical models of complex phenomena
in engineering and science.

Macroscopic models for systems with spatial dependence
(``spatially extended") are often in the form of partial
differential equations (PDEs).  Randomness appears in these models
as stochastic forcing, uncertain parameters, random sources or
inputs, and random boundary conditions (BCs). These models are
usually called stochastic partial differential equations (SPDEs).
Note   that SPDEs may also serve as intermediate ``mesoscopic"
models in some multiscale systems. Although we may think that
SPDEs could be reduced to large systems of stochastic ordinary
differential equations (SODEs) in numerical approaches
\cite{Millet, Allouba}, it   is beneficial to work on SPDEs
directly when dealing with some dynamical issues
\cite{Bong,Cessi,DelSole,DuanGaoSchm,Farrell2,
Griffa,Holloway,Imkeller3,Muller,NorCah81,Samelson,
Saravanan-McWilliams, Sura3}.

There is a growing recognition of a role for the inclusion of
 stochastic terms in the modeling of complex systems.
 For example, there has been increasing interest  in mathematical
 modeling via  SPDEs, for
  the climate system, condensed matter physics,
materials sciences,  mechanical and electrical engineering, and
finance, to name just a few.   The inclusion
 of stochastic  effects has led to interesting new mathematical problems
 at the interface of dynamical systems,  partial
 differential equations, scientific computing, and probability theory.
Problems arising in the context of stochastic dynamical modeling
have
 inspired interesting research  topics about, for example,
 the interaction between noise,
 nonlinearity and multiple scales, and about efficient numerical methods for
 simulating random phenomena.

 There has been some   promising new development in understanding dynamics of
 SPDEs via invariant manifolds
\cite{DuanLuSchm, DuanLuSchm2, MZZ, WangDuan} and stochastic
homogenization \cite{WangDuan1, WangDuan2}. But we will not
discuss these issues in this paper. For general background on
SPDEs, see \cite{DaPrato, Roz, Walsh, Chow, Rockner}.

Although  some   progress has been made in SPDEs in the past
decade, many challenges remain and new problems arise in modeling
basic mechanisms in  complex systems under uncertainty.    These
challenging problems include overall impact of noise, stochastic
bifurcation, ergodic theory, invariant manifolds,   and
predictability of dynamical behavior,   to name just a few.
Solutions for these problems will greatly enhance our ability in
understanding, quantifying, and managing uncertainty and
predictability in engineering and science. Breakthroughs in
solving these challenging problems are expected to emerge.

This article is organized as follows. After reviewing some basic
concepts on probability in Hilbert space in \S
\ref{tools-Hilbert}, we discuss stochastic analysis and  SPDEs in
\S \ref{spde}. Then we derive correlations of some linear SPDEs,
Lyapunov exponents, and the impact of uncertainty in \S
\ref{corr}, \S \ref{liapunov} and \S \ref{impact}, respectively.

\section{Stochastic Tools in Hilbert Space}
\label{tools-Hilbert}

\subsection{Hilbert space}

Recall that the Euclidean space $\R^n$ is equipped  with the usual
metric or distance $d(x,y)=\sqrt{\sum_{j=1}^n (x_j-y_j)^2}$, norm
or length $\|x\| =\sqrt{\sum_{j=1}^n  x_j ^2}$, and the usual
scalar product $ x \cdot y = <x, y>= \sum_{j=1}^n x_j y_j$. The
Borel $\s-$field of  $\R^n$, i.e., $\mathcal{B}(\R^n)$ is
generated by all open balls in $\R^n$.

Hilbert space $H$ is a set with three mathematical operations:
scalar multiplication, addition and scalar product $\langle \cdot,
\cdot \rangle$,   satisfying the \emph{usual} properties as we are
familiar with in elementary mathematics. The scalar product
induces a natural norm $\| u\| = \sqrt{\langle u, u\rangle}$. The
Borel $\s-$field of  $H$, i.e., $\mathcal{B}(H)$ is generated by
all open balls in $H$.

\subsection{Probability in Hilbert space}


Given a probability space $(\Omega, \mathcal{F}, \mathbb{P})$,
with sample space $\Om$, $\s-$field $\mathcal{F}$ and probability
measure $\PX$.  Consider a random variable in Hilbert space $H$
(i.e., taking values in $H$):
$$
X: \Om \to H.
$$
Its mean or mathematical expectation is defined in terms of the
integral with respect to the probability measure $\PX$:
$$
\EX (X) = \int_{\Om} X(\om) d\PX(\om).
$$

Its variance is:
$$
Var (X) =  \EX \langle X-\EX (X),  X-\EX (X)\rangle= \EX \|X-\EX
(X)\|^2 = \EX \|X\|^2 - \|\EX (X)\|^2
$$
Especially, if $\EX (X)=0$, then $ Var(X) =\EX \|X\|^2$.

Covariance operator of $X$ is defined as \index{Covariance
operator in Hilbert space}
\begin{eqnarray}
 Cov(X) = \EX [(X-\EX(X))\otimes(X-\EX(X))],
\end{eqnarray}
where for any $a, b \in H$, we denote $a \otimes b$ the linear
operator in $H$ defined by
\begin{eqnarray}
a \otimes b: \; H \to H, \\
 (a \otimes b) h = a \langle b, h\rangle, \; h \in H.
\end{eqnarray}

Let $X$ and $Y$ be two random variables taking values in Hilbert
space $H$. The correlation operator of $X$ and $Y$ is defined by
\index{Correlation operator in Hilbert space}
\begin{eqnarray}
 Cor(X, Y) = \EX [(X-\EX(X))\otimes(Y-\EX(Y))].
\end{eqnarray}

\begin{remark}
$Cov(X)$ is a symmetric positive and trace-class linear operator
with trace
\begin{eqnarray}
 Tr \; Cov(X) =\EX  \langle X-\EX(X), X-\EX(X)\rangle= \EX \|X-\EX(X)\|^2.
\end{eqnarray}
Moreover,
\begin{eqnarray}
 Tr \; Cor(X, Y) = \EX  \langle X-\EX(X),  Y-\EX(Y)\rangle.
\end{eqnarray}

\end{remark}

\subsection{Gaussian random variables} \index{Gaussian random


Recall that a random variable taking values in $\R^n$
$$
X:\; \Om \to \R^n
$$
is called Gaussian, if for any $a=(a_1, \cdots, a_n)  \in \R^n$,
$X \cdot a = a_1X_1 \cdots + a_nX_n$ is a scalar Gaussian random
variable. A Gaussian random variable  in $\R^n$ is denoted as $X
\sim \N(m, Q)$, with mean vector $m$ and covariance matrix $Q$.
The covariance matrix $Q$ is symmetric and non-negative (i.e.,
eigenvalue $\la_j \geq 0, \; j=1, \cdots, n$). The trace of $Q$ is
written as  $Tr(Q)=\la_1+\cdots + \la_n$. The covariance matrix is
defined as
$$
Q = ( Q_{ij}) = ( \EX [(X_i-m_i)(X_j-m_j)]).
$$
We use the notations  $E(X)=m$ and $ Cov (X)=Q$. The probability
density function for this Gaussian random variable  $X$ in $\R^n$
is
 \begin{eqnarray}
 f(x)=f(x_1, \cdots, x_n)=\frac{\sqrt{\det(A)}}{(2\pi)^{n/2}}
 e^{-\frac12 \sum_{j,k=1}^n (x_j-m_j) a_{jk} (x_k-m_k)},
\end{eqnarray}
where $A=Q^{-1}=(a_{jk})$.

The probability distribution function  of $X$ is
 \begin{eqnarray}
 F(x)= \PX(\om:\; X(\om) \leq x) =\int_{-\infty}^x f(x)dx.
\end{eqnarray}

The probability distribution measure  $\mu$ (or law
$\mathcal{L}_X$) of $X$ is:
\begin{eqnarray}
 \mu (B) = \int_B f(x) dx, \;\;\; B \in \B (\R^n).
\end{eqnarray}

Here are some observations. For $a, b \in \R^n$,

\begin{eqnarray}
\EX \langle X, a\rangle = \EX \sum_{i=1}^n a_i X_i = \sum_{i=1}^n
a_i \EX (X_i) = \sum_{i=1}^n a_i m_i = \langle m, a\rangle
\end{eqnarray}

\begin{eqnarray}
\EX (\langle X-m, a\rangle \langle X-m, b\rangle  )
 &=& \EX ( \sum_i a_i (X_i-m_i) \; \sum_j b_j (X_j-m_j) ) \\
 &=&    \sum_{i,j} a_i b_j \EX [(X_i-m_i)(X_j-m_j)] \\
&=&    \sum_{i,j} a_i b_j Q_{ij} \\
&=&    \langle Q a, b\rangle
\end{eqnarray}
In particular, $ \langle Q a, a\rangle=\EX  \langle X-m,
a\rangle^2 \geq 0$, which confirms that $Q$ is non-negative. Also,
$\langle Qa, b\rangle=\langle a, Qb\rangle$, which implies that
$Q$ is symmetric.

\bigskip

\begin{definition}
\label{Gaussian} \index{Gaussian random variable in Hilbert space}
A random variable $X: \Om \to H$ in Hilbert space $H$ is called a
Gaussian random variable and denoted as $X \sim \N (m, Q)$, if for
every $a$ in $H$, the real random variable $\langle X, a\rangle$
is a scalar Gaussian random variable (i.e., taking values in
$\R^1$).
\end{definition}

\begin{remark}
If $X $ is a Gaussian random variable taking values in Hilbert
space $H$, then for all $a, b\in H$,

(i) Mean vector $\EX (X)=m: \; \EX \langle X, a\rangle = \langle
m, a\rangle;$

(ii) Covariance operator $Cov (X)=Q$: $\EX (\langle X-m, a\rangle
\langle X-m, b\rangle  )=\langle Qa, b\rangle  $
\end{remark}

\begin{remark}
The Borel probability measure $\mu$ on $(H, \B(H))$, induced by a
  Gaussian random variable $X$ taking values in Hilbert space
$H$, is called a Gaussian measure. If $\mu$ is a Gaussian measure
in $H$, then there exist an element $m \in H$ and a non-negative
symmetric continuous linear operator $Q$: $H\to H$ such that: For
all $h, h_1, h_2 \in H$,

(i) Mean vector $m$:  $\int_H \langle h, x\rangle d\mu(x)= \langle
m, h\rangle;$

(ii) Covariance operator $Q$: $\int_H \langle h_1, x\rangle
\langle h_2, x\rangle d\mu(x)- \langle m, h_1\rangle \langle m,
h_2\rangle =\langle Qh_1, h_2\rangle  $
\end{remark}

Since  the covariance operator $Q$ is non-negative and symmetric,
the eigenvalues of $Q$ are non-negative and the eigenvectors
$e_n$'s form an orthonormal basis for Hilbert space $H$:
$$
Q e_n = q_n e_n, \; n=1, 2, \cdots.
$$
Moreover, trace $Tr (Q) = \sum_{n=1}^{\infty} q_n$.

Note that
\begin{eqnarray}
X-m = \sum X_n e_n
\end{eqnarray}
with coefficients $X_n = \langle X-m, e_n\rangle$.
\begin{eqnarray}
\EX X_n^2 = \EX (\langle X-m, e_n\rangle \; \langle X-m, e_n
\rangle ) = \langle Qe_n, e_n\rangle = \langle q_n e_n, e_n\rangle
=q_n.
\end{eqnarray}

Therefore,
\begin{eqnarray}
\|X-m\|^2 = \sum X_n^2
\end{eqnarray}

\begin{eqnarray}
\EX \|X-m\|^2 = \sum \EX X_n^2 =\sum q_n =Tr (Q).
\end{eqnarray}

\medskip


 We use $L^2(\Omega, H)$, or just $L^2(\Omega)$, to denote the (new) Hilbert space of
square-integrable random variables $x:\; \Omega \to H$. In Hilbert
space $L^2(\Omega, H)$, the scalar product is
\[
< x, y> = \EX <x(\om), y(\om)>,
\]
where $\EX$ denotes the mathematical expectation (or mean) with
respect to probability $\mathbb{P}$. This scalar product induces
the usual mean square norm
\[
\|x\|: =\sqrt{\EX  \|x(\om) \|^2},
\]
which provides an appropriate convergence concept.

\subsection{Brownian motion}

Recall that a Brownian motion (or Wiener process) $W(t)$,   also
denoted as $W_t$, in $R^n$, is a Gaussian stochastic process on a
underlying probability space $(\Om, \cF, \PX)$, where $\Om$ is a
sample space, $\cF$ is a $\sigma-$field composed of measurable
subsets of $\Om$ (called ``events"), and $\PX$ is a probability
(also called probability measure). Being a Gaussian process, $W_t$
is characterized by its mean vector (taking to be the zero vector)
and its covariance operator, a $n\times n$ symmetric positive
definite matrix (taking to be the identity matrix). More
specifically, $W_t$ satisfies the following conditions
\cite{Oksendal}:

\noindent (a)\ \ \ W(0)=0, \; a.s.\\
(b)\ \ \ W has continuous paths or trajectories, \; a.s.\\
(c)\ \ \ W has independent increments,  \\
(d)\ \ \ W(t)-W(s) $\sim$ $N(0, (t-s)I)$, $t\  and\  s >0 \ and\ t
\geq s \geq 0$, where $I$ is the $n\times n$ identity matrix. The
Brownian motion in $R^1$ is called a scalar Brownian motion.

\begin{remark}
(i) The covariance operator here is a constant  $n\times n$
identity matrix $I$, i.e., $Q=I$ and $ Tr(Q)=n$.

(ii) $W(t)  \sim N(0, t I)$, i.e., $W(t)$ has probability density
function $p_t(x) = \frac1{(2\pi t)^{\frac{n}{2}}}
e^{-\frac{x_1^2+...+x_n^2}{2t}}$.

(iii) For every $\a \in (0, \frac12)$, for a.e. $\om \in \Om$,
there exists $C(\om)$ such that

$$ |W(t, \om)-W(s, \om)| \leq
C(\om) |t-s|^{\a},
$$
namely, Brownian paths are H\"older continuous with exponent less
than one half.
\end{remark}

Note that the generalized time derivative of Brownian motion $W_t$
is a mathematical model for  white noise \cite{Arnold74}.

Now we define Wiener process, or Brownian motion, in Hilbert space
$U$. We consider  a symmetric nonnegative linear operator $Q $ in
$U$. If the trace $Tr (Q) < + \infty$, we say $Q$ is a trace class
(or nuclear) operator. Then there exists a complete orthonormal
system (eigenfunctions) $\{e_k\}$ in U , and a  (bounded) sequence
of nonnegative real numbers (eigenvalues) $q_k$ such that
\begin{eqnarray*}
Qe_k = q_k e_k \ , \ k = 1,2, \cdots.
\end{eqnarray*}
A   stochastic process $W(t)$, or $W_t$, taking values in $U$ for
$t \geq 0$ , is called a
Wiener process with covariance operator $Q$ if :\\
(a)\ \ \ $W(0)=0$, \; a.s.\\
(b)\ \ \ $W$ has continuous trajectories, \; a.s.\\
(c)\ \ \ $W$ has independent increments,  \\
(d)\ \ \ $ W(t)-W(s)  \sim$ N(0,(t-s)Q),  $  t \geq s.$
\\
Hence, $\EX W(0)=0$ and $ Cov (W(t))= t Q$.

We can think the covariance matrix $Q$ as a $\infty \times \infty$
diagonal matrix, with diagonal elements $q_1. q_2, \cdots, q_n,
\cdots$.

For any $a\in H$,
$$
a = \sum_n <a, e_n> e_n
$$
$$
Qa =\sum_n <a, e_n> Qe_n = \sum_n q_n <a, e_n>  e_n
$$

We define, for $ \gamma>0$, especially for $\gamma \in (0, 1)$,
\begin{eqnarray}
Q^{\gamma}a =  \sum_n q_n^{\gamma} <a, e_n>  e_n,
\end{eqnarray}
when the right hand side is defined.
\bigskip

\textbf{Representations of Brownian motion in Hilbert space: }

It is known that $W_t$ has an infinite series representation
\cite{DaPrato}:
\begin{eqnarray}
  W_t(\om) = \sum_{n=1}^{\infty} \sqrt{q_n} W_n(t) e_n,
\end{eqnarray}
where
\begin{equation}
            W_n(t) :=
            \left\{
            \begin{array}{lll}
                \frac{\langle W(t),\;  e_n \rangle}{\sqrt{q_n}} & , &\; q_n > 0,\\
                0 & , & \; q_n=0.
            \end{array}
            \right.
            \label{wn}
        \end{equation}
are the standard scalar independent Brownian motions. Namely,
$W_n(t) \sim \N(0, t)$, $\EX W_n(t)=0$,  $\EX W_n(t)^2 =t$ and
$\EX W_n(t)W_n(s)=\min(t, s)$.

This infinite series converges in $L^2(\Om)$, as long as
$Tr(Q)=\sum q_n < \infty $.

\begin{remark}
For example in $H= L^2(0, 1)$, we have an orthonormal basis $e_n=
\sin(n\pi x)$ . In the above infinite series representation,
taking derivative with respect to $x$, we get
\begin{eqnarray}
 \p_x W_t(\om) = \sum_{n=1}^{\infty} \sqrt{2} (n\pi) \sqrt{q_n} W_n(t) \cos(n\pi
 x).
\end{eqnarray}
 In order
for this series to converge, we need $ \sqrt{2} (n\pi) \sqrt{q_n}$
  converges to zero sufficiently fast as $n \to \infty$. So $q_n$ being small
helps. In this sense, the trace $Tr(Q)=\sum q_n$  may be seen as a
measurement for spatial regularity of white noise $\dot{W}_t$: the
smaller the trace $Tr(Q)$, the more regular of the noise.
\end{remark}

\bigskip

We do some calculations. For $a, b \in H$, we have the following
identities.
\begin{eqnarray}
 \EX \langle W_t, W_t \rangle = \EX \|W_t\|^2
&=& \EX \langle \sum_{n=1}^{\infty} \sqrt{q_n} W_n(t) e_n,
\sum_{n=1}^{\infty} \sqrt{q_n} W_n(t) e_n \rangle  \nonumber \\
&=&  \sum_{n=1}^{\infty} q_n \EX \langle W_n(t), W_n(t)\rangle  \nonumber \\
&=& t \sum_{n=1}^{\infty} q_n = t \; Tr (Q).
\end{eqnarray}

\begin{eqnarray}
 \EX \langle W_t, a \rangle &=&   \langle 0, a \rangle =0.
\end{eqnarray}

\begin{eqnarray*}
 \EX (\langle W_t, a\rangle \; \langle W_t, b\rangle)
&=& \EX [\langle \sum_{n=1}^{\infty} \sqrt{q_n} W_n(t) e_n, a
\rangle \; \langle \sum_{n=1}^{\infty} \sqrt{q_n} W_n(t) e_n, b
\rangle  ] \\
&=& \EX  \sum_{m,n} \sqrt{q_m q_n} W_m(t)W_n(t) <e_m, a><e_n, b> \\
&=& \sum_{n} t q_n  <e_n, a><e_n, b> \\
&=& t \sum_{n}    < e_n, a><q_n e_n, b> \\
&=& t \sum_{n}    < e_n, a><Q e_n, b> \\
&=& t \sum_{n}     <Q <e_n, a>e_n, b> \\
&=& t  <Q \sum_{n} <e_n, a>e_n, b> \\
&=& t  <Q a, b>,
\end{eqnarray*}
where we have used the fact that $a=\sum_{n} <e_n, a>e_n$ in the
final step.

In particular, taking $a=b$, we obtain
\begin{eqnarray}
 \EX \langle W_t, a \rangle^2 &=&  t \langle Qa, a\rangle, \\
 Var ( \langle W_t, a \rangle ) &=&  t \langle Qa, a\rangle.
\end{eqnarray}

More generally,
\begin{eqnarray}
\EX (\langle W_t, a\rangle \; \langle W_s, b\rangle)= \min(t,s)<Q
a, b>.
\end{eqnarray}


Moreover,
\begin{eqnarray}
 \EX [ W_t(x) W_s (y) ]
&=& \EX \{ \sum_{n=1}^{\infty} \sqrt{q_n} W_n(t) e_n(x)
\sum_{m=1}^{\infty} \sqrt{q_m} W_m(s) e_m(y) \}  \nonumber \\
&=& \sum_{n,
m=1}^{\infty} \sqrt{q_n q_m} \; \EX [W_n(t)W_m(s)] e_n(x) e_m(y) \nonumber \\
&=& \min(t, s)  \sum_{n=1}^{\infty} q_n  e_n(x)e_n(y)  \nonumber  \\
&=&  \min(t,s) q(x,y),
\end{eqnarray}
where
$$ q(x,y) =
\sum_{n=1}^{\infty} q_n  e_n(x)e_n(y).
$$

On the other hand, the covariance operator may be represented in
terms of $q(x,y)$:
\begin{eqnarray}
Qa = Q \sum_{n} <e_n, a>e_n &=&   \sum_{n} <e_n, a> Q e_n   \nonumber  \\
& =& \sum_{n} <e_n, a> q_n e_n  \nonumber\\
& =& \sum_{n} \int_0^1 a(y)e_n(y)dy q_n e_n(x) \nonumber \\
&=& \int_0^1 q(x,y)a(y)dy.
\end{eqnarray}

Sometimes we call the kernel function $q(x,y)$ the spatial
correlation. The smoothness of $q(x,y)$ depends on the decaying
property of $q_n$'s.

\section{Stochastic Partial Differential Equations}
\label{spde}

\subsection{Stochastic calculus in Hilbert space}

We define the Ito stochastic integral:
\begin{eqnarray*}
 \int_0^T \Phi(s, \om) dW_s.
\end{eqnarray*}
 Note that since $W_t$ takes values in Hilbert space $U$.   The integrand
   $ \Phi(t, \om)$ is usually a linear operator from $U$ to $H$
(for each time $t$ and each sample $\om$):
 $$
\Phi:  U \to H.
$$
It is also possible to take $W_t$ as a scalar, real-valued
Brownian motion. For example, in $\int_0^T u(s)dW_s$, if $W_t$ is
a scalar Brownian motion, we can interpret the integrand $u$ as a
multiplication operator.

For Brownian motion $W_t$ in $U$
\begin{eqnarray}
  W_t(\om) = \sum_{n=1}^{\infty} \sqrt{q_n} W_n(t) e_n,
\end{eqnarray}
we define
\begin{eqnarray}
 \int_0^T \Phi(s, \om) dW_s(\om)
 = \sum_{n=1}^{\infty} \sqrt{q_n} \int_0^T \Phi(s, \om) e_n\; dW_n(s).
\end{eqnarray}

 A property of Ito integrals:

\begin{eqnarray}
 \EX \int_0^T \Phi(s, \om) dW_s(\om) =0.
\end{eqnarray}

\subsection{Deterministic calculus in Hilbert space}

In order to discuss more tools to handle stochastic calculus in
Hilbert space, we need to recall some concepts of deterministic
calculus.

 For calculus in Euclidean space $\R^n$, we have concepts
\emph{derivative} and \emph{directional derivative}.
\index{directional derivative} In Hilbert space, we have the
corresponding \emph{Fr\'echet derivative} and \emph{Gateaux
derivative} \cite{Berger, Zeidler2}.

\index{Fr\'echet derivative}

\index{Gateaux derivative}

 Let $H$ and $\hat{H}$ be two Hilbert spaces, and $F:
U \subset H \to \hat{H}$ be a map, whose domain of definition $U$
is an open subset of $H$. Let $L(H, \hat{H})$ be the set of all
bounded linear operators $A: H \to \hat{H}$.  In particular, $L(H)
:= L(H, H)$. We can also introduce a multilinear operator $A_1: H
\times H \to \hat{H}$. The space of all these multilinear
operators is denoted as $L(H \times H, \hat{H})$.

\begin{definition}
The map $F$ is Fr\'echet differentiable at $u_0 \in U$ if there is
a linear bounded operator $A: H \to \hat{H}$ such that
$$
\lim_{h\to 0} \frac{\| F(u_0+h) -F(u_0)-A h\|}{\| h\|} =0, \\
i.e., \| F(u_0+h) -F(u_0)-A h\| = o (\|h\|),
$$
where $\| \cdot \|$ denotes norms in $H$ or $\hat{H}$ as
appropriate. The linear bounded operator  $A$ is called the
Fr\'echet derivative of  $F$ at $u_0$, and is denoted as
$F_u(u_0)$, or sometimes $F'(u_0)$.
\end{definition}

If $F$ is linear, its Fr\'echet derivative is itself.

\begin{definition}
 The directional derivative of $F$ at $u_0 \in U$ in the direction
 $h \in H$ is defined by the limit
 $$
 \delta F(u_0; h) := \lim_{t\to 0}\frac{ F(u_0+t h) -F(u_0)}{t}.
 $$
 If this limit exists for every $h \in H$, and
 $F'_G(u_0) h:= \delta F(u_0; h)$ is a linear map, then we say
 that $F$ is Gateaux differentiable at $u_0$. The linear map
 $F'_G(u_0)$ is called the Gateaux derivative of $F$ at $u_0$.
\end{definition}

In fact,  if $F$ is Fr\'echet differentiable at $u_0$, then it is
also Gateaux differentiable at $u_0$ and they are equal
\cite{Berger, Zeidler2}:
$$
F_u(u_0) = F'_G(u_0).
$$
But the converse is not usually true. It is true under suitable
conditions; see \cite{Berger}, p. 68.

\bigskip

For any nonlinear  map $F:  U \subset H  \to Y$, its Fr\'echet
derivative  $F'(u_0)$ is a linear operator, i.e., $F'(u_0) \in
L(H, Y)$. Similarly, we can define higher order Fr\'echet
derivatives. Each of these derivatives is a multilinear operator.
For example,
\begin{eqnarray*}
f''(u_0) &:&   H \times H \to Y, \\
        & &  (h, k) \rightarrowtail f''(u_0)(h, k).
\end{eqnarray*}
We denote
\begin{eqnarray*}
f''(u_0)h^2 &:=&   f''(u_0)(h, h), \\
f^{\prime \prime \prime}(u_0)h^3  &:=& f^{\prime \prime
\prime}(u_0)(h,h,h),
\end{eqnarray*}
and similarly for higher order derivatives.

 Then we have the Taylor expansion in Hilbert space
\begin{eqnarray*}
 f(u+ h) =  f(u)+ f'(u)h + \frac1{2!} f''(u)h^2
 +\cdots + \frac1{m!} f^{(m)}(u)h^m + R_{m+1}(u, h),
\end{eqnarray*}
where the remainder
\begin{eqnarray*}
R_{m+1}(u, h) = \frac1{(m+1)!} \int_0^1 (1-s)^m f^{(m+1)}(u+s
h)h^{m+1} ds.
\end{eqnarray*}

\bigskip

\begin{remark}
It is interesting to relate these two concepts with the classical
concept of variational derivative (or functional derivative) that
is used in the context of calculus of variations. The variational
derivative is usually considered for functionals defined as
spatial integrals, such as a Langrange  functional in mechanics.
For example,
$$
F(u) = \int_0^l G(u(x), u_x(x)) dx,
$$
where $u$ is defined on $x\in [0, l]$ and satisfies zero Dirichlet
boundary condition at $x=0, l$. Then it is known \cite{Hunter}
that
\begin{eqnarray}  \label{varation}
F_u(u) h = \int_0^l \frac{\delta F}{\delta u} h(x) dx,
\end{eqnarray}
for $h$ in the Hilbert space $H^1_0(0, l)$. The quantity
$\frac{\delta F}{\delta u}$ is the classical variational
derivative of $F$. The equation (\ref{varation}) above gives the
relation between Fr\'echet derivative and variational derivative.
\end{remark}

\subsection{Ito's formula in Hilbert space}

We get back to stochastic calculus in Hilbert space $H$. We first
look at the Ito's formula; see \cite{DaPrato} or \cite{Rockner}.

\begin{theorem}
Let $u$ be the solution of the  SPDE
\begin{eqnarray}
 du = b(u) dt + \Phi(u) dW_t, \; \; u(0)=u_0.
\end{eqnarray}
Assume that $F(t,u)$ be a given smooth (deterministic) function:
$$
F: [0, \infty) \times H \to \R^1.
$$

Then

(i) Ito's Formula: Differential form
\begin{eqnarray}
dF(t, u(t)) &=&
  F_u(t, u(t)) ( \Phi(u(t))dW_t )  \nonumber \\
& + & \{ F_t(t, u(t))+   F_u(t, u(t)) (b(u(t))  \nonumber \\
& + & \frac12 Tr [F_{uu}(t, u(t)) (\Phi(u(t))Q^{\frac12}) \;
(\Phi(u(t))Q^{\frac12})^* ] \} dt,
\end{eqnarray}
where $F_u$ and $F_{uu}$ are   Fr\'echet derivatives, $F_t$ is the
usual partial derivative in time, and $*$ denotes adjoint
operator. This formula is understood with the following symbolic
operations in mind:
$$
\langle dt, dW_t \rangle = \langle dt, dW_t \rangle =0,\; \\
\langle dW_t, dW_t \rangle = Tr(Q) dt
$$

(ii) Ito's Formula: Integral form
\begin{eqnarray}
 F(t, u(t)) &=& F(0, u(0)) +
\int_0^t   F_u(s, u(s)) ( \Phi(u(s))dW_s)  \nonumber \\
& + & \int_0^t \{ F_t(s, u(s))+   F_u(s, u(s))(b(u(s))  \nonumber \\
& + & \frac12 Tr [F_{uu}(s, u(s)) (\Phi(u(s))Q^{\frac12}) \;
(\Phi(u(s))Q^{\frac12})^* ] \} ds,
\end{eqnarray}
where $F_u$ and $F_{uu}$ are   Fr\'echet derivatives, and $F_t$ is
the usual partial derivative in time. Moreover,
$$
\int_0^t   F_u(s, u(s))(\Phi(u(s))dW_s ) = \int_0^t
\tilde{\Phi}(u(s)) dW_s
$$
and for all $s$, $v \in H$, $\om \in \Om$, the operator
$\tilde{\Phi}(u(s))$ is defined by
$$
\tilde{\Phi}(u(s))(v) :=   F_u(s, u(s))(\Phi(u(s))v ).
$$
Also,
$$
Tr [F_{uu}(s, u(s)) (\Phi(u(s))Q^{\frac12}) \;
(\Phi(u(s))Q^{\frac12})^* ] = Tr [(\Phi(u(s))Q^{\frac12})^* \;
F_{uu}(s, u(s)) \; (\Phi(u(s))Q^{\frac12})].
$$
\end{theorem}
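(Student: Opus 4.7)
The plan is to prove the differential form; the integral form then follows by integration on $[0,t]$. My approach is to reduce the infinite-dimensional statement to the classical finite-dimensional Ito formula via a Galerkin-type truncation of both the state space and the noise, and then pass to the limit.

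First I would truncate. Using the diagonalization $Qe_n = q_n e_n$ of the covariance, set $W_t^N = \sum_{n=1}^N \sqrt{q_n}\, W_n(t)\, e_n$, let $P_N$ be the orthogonal projection onto $H_N = \mathrm{span}\{e_1,\ldots,e_N\}$, and let $u^N$ solve the finite-dimensional SODE obtained by projecting the SPDE with $P_N b$, $P_N \Phi$, and noise $W_t^N$, starting from $P_N u_0$. Because $u^N$ is driven by the finitely many independent scalar Brownian motions $W_1,\ldots,W_N$, the classical Ito formula applies directly to $F(t,u^N(t))$.

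Second, I would identify the second-order correction as the claimed trace. Over a short increment,
\[
\Delta u^N \approx P_N b(u^N)\,\Delta t + \sum_{n=1}^N \sqrt{q_n}\, P_N \Phi(u^N) e_n\, \Delta W_n,
\]
and the Hilbert-space Taylor expansion recalled earlier gives the second-order contribution
\[
\tfrac12 F_{uu}(t,u^N)(\Delta u^N,\Delta u^N) \approx \tfrac12 \sum_{n=1}^N q_n F_{uu}(t,u^N)(\Phi(u^N)e_n,\Phi(u^N)e_n)\,\Delta t,
\]
using $\EX(\Delta W_n \Delta W_m)=\delta_{nm}\Delta t$ and the fact that cross terms with $\Delta t$ are $o(\Delta t)$. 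Recognizing $\sqrt{q_n}\,\Phi(u^N)e_n = (\Phi(u^N)Q^{1/2})e_n$ and identifying the bilinear form $F_{uu}$ with a symmetric operator via $F_{uu}(h,k)=\langle \mathcal{F}_{uu} h, k\rangle$, the cyclic trace identity yields
\[
\sum_n F_{uu}\bigl((\Phi Q^{1/2})e_n,(\Phi Q^{1/2})e_n\bigr) = Tr\bigl[\mathcal{F}_{uu}(\Phi Q^{1/2})(\Phi Q^{1/2})^*\bigr],
\]
which is exactly the trace term in the statement. Combined with the $F_t\,dt$ contribution from the time-Taylor expansion and the first-order $F_u(b\,dt + \Phi\,dW^N)$ terms, this gives Ito's formula for the truncated problem.

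Third, I would pass to the limit $N\to\infty$. Under the hypotheses making the SPDE well-posed --- in particular $\Phi(u)Q^{1/2}$ Hilbert--Schmidt and $b,\Phi$ Lipschitz --- standard estimates give $u^N \to u$ in $L^2(\Om;H)$ uniformly on compact time intervals. The stochastic integral term converges by Ito's isometry applied to the representation $\int_0^t F_u(s,u^N)\Phi(u^N)\,dW_s^N$; the Bochner integral terms converge by dominated convergence; and the trace term converges because the partial sums $\sum_{n=1}^N q_n F_{uu}(\Phi e_n,\Phi e_n)$ are dominated by $\|F_{uu}\|\,\|\Phi\|^2 Tr(Q)$ and are controlled by the continuity of $Tr$ on nuclear operators.

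The main obstacle is controlling the trace term and the Taylor remainder. One must verify that $\mathcal{F}_{uu}(\Phi(u)Q^{1/2})(\Phi(u)Q^{1/2})^*$ is trace-class; this holds because $\Phi(u)Q^{1/2}$ is Hilbert--Schmidt by hypothesis (the very condition that makes $\int \Phi(u)\,dW_s$ an $H$-valued martingale), and the product of a bounded operator with $B B^*$ for Hilbert--Schmidt $B$ is nuclear. Secondly, one must show that summing the cubic and higher remainder terms in the Hilbert-space Taylor expansion over a partition of $[0,t]$ vanishes in $L^1(\Om)$; this rests on $F\in C^2$ with locally bounded second Fr\'echet derivative together with the BDG-type estimate $\EX\|\Delta u^N\|^p = O(\Delta t^{p/2})$ for $p\geq 2$, which makes the third-order remainder $O(\Delta t^{3/2})$ per step and hence $O(\Delta t^{1/2})\to 0$ after summation.
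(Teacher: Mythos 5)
Your proposal is a legitimate route, but note that the paper itself does not prove this theorem at all: it states the formula and defers entirely to Da Prato--Zabczyk and Pr\'ev\^ot--R\"ockner, where the standard argument proceeds not by a Galerkin truncation of the \emph{equation} but by approximating the \emph{integrands} $b$ and $\Phi$ by elementary (piecewise-constant, finite-rank) processes, Taylor-expanding $F$ over a partition of $[0,t]$, and identifying the quadratic variation of the stochastic integral. Your Galerkin scheme buys a clean reduction to the classical finite-dimensional Ito formula and handles the trace term transparently via $\sqrt{q_n}\,\Phi e_n = (\Phi Q^{1/2})e_n$, but it pays for this with an extra layer of approximation: you must prove $u^N\to u$ for the projected equations, which requires Lipschitz/well-posedness hypotheses not present in the theorem statement, and which becomes genuinely delicate if $b$ carries an unbounded operator (as in the class $du=[Au+f(u)]dt+G(u)dW_t$ considered later in the paper), where $u$ is only a mild solution and the reference proofs instead insert a Yosida-type regularization. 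Two points in your sketch need tightening to be a proof rather than the usual heuristic. First, identifying the second-order term requires more than $\EX(\Delta W_n\Delta W_m)=\delta_{nm}\Delta t$: one must show that the discrete sums
\begin{equation*}
\sum_{i}\bigl[\Delta W_n^{i}\,\Delta W_m^{i}-\delta_{nm}\,\Delta t_i\bigr]
\end{equation*}
converge to zero in probability (the $L^2$ quadratic-variation estimate), since the formula is asserted pathwise, not in expectation. Second, you have two limits (partition mesh $\to 0$ and $N\to\infty$) and should say in which order they are taken and why the error terms are uniform in the other parameter; the reference proofs avoid this by doing the partition limit once, for elementary integrands, and then a single $L^2$-approximation limit. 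With those repairs your outline is sound and arguably more elementary than the cited proofs, at the cost of generality.
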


 Note that for the symmetric non-negative covariance
operator $Q$ with eigenvalues $q_n\geq 0$ and eigenvector $e_n$,
$n=1,2, \cdots$, we have
$$
Q u = \sum_n q_n \langle u, e_n\rangle e_n, \\
Q^{\frac12}u = \sum_n q_n^{\frac12} \langle u, e_n\rangle e_n.
$$
In fact, for a given function $h: \R \to \R$, we define the
operator $h(Q)$ through the following natural formula
(\cite{Zeidler1}, p. 293-294),
$$
h(A)u = \sum_n h(q_n)  \langle u, e_n\rangle e_n,
$$
when the right hand side is defined.

\bigskip

\begin{example}
A typical application of Ito's formula for SPDEs.

\begin{eqnarray}
 du = b(u) dt + \Phi(u) dW_t, \; \; u(0)=u_0.
\end{eqnarray}
Take Hilbert space $H=L^2(D), D \subset \R^n$, with the usual
scalar product $\langle u, v\rangle = \int_D u v dx$.

Energy functional $F(u)= \frac12 \int_D u^2 dx=\frac12 \|u\|^2$.
In this case, $F_u(u) (h) = \int_D u h dx$ and $F_{uu}(u) (h,
k)=\int_D h(x) k(x)dx$.

\begin{eqnarray*}
 \frac12 d \|u\|^2
 & = & \{\langle u, b(u)\rangle
 + \frac12 Tr\int_D  [(\Phi(u)Q^{\frac12}) (\Phi(u)Q^{\frac12})^*]dx \} dt  +
 \langle u, \Phi(u) dW_t \rangle,
\end{eqnarray*}

Integrating and taking mathematical expectation, we obtain
%

\begin{eqnarray*}
 \frac12  \EX \|u\|^2
 & = &  \frac12  \EX \|u(0)\|^2+ \EX \int_0^t \langle u, b(u)  \rangle dt
 + \frac12 \EX \int_0^t Tr\int_D [(\Phi(u(r))Q^{\frac12})
 (\Phi(u(r))Q^{\frac12})^*]dx
 dr
\end{eqnarray*}

Note that in this special case, $F_u$ is a bounded operator in
$L(H, \R)$, which can be identified with $H$ itself due to the
Riesz representation theorem.
\end{example}

\begin{example}

Energy functional $F(u)=   \int_D |u|^{2p} dx =   \int_D (|u|^2)^p
dx$ for $p \in [1, \infty)$. In this case, $F_u(u_0) (h) = 2p
\int_D |u_0|^{2p-2}u_0  h dx$ and $F_{uu}(u_0) (h, k)=2p \int_D
|u_0|^{2p-2} h k dx + 4p(p-1)\int_D |u_0|^{2p-4} u_0 h(x) u_0
k(x)dx$.

\end{example}

\begin{example}    (\cite{Chow}, p. 153)
Let $H$ be a Hilbert space with scalar product $ <\cdot, \cdot>$
and norm $\| \cdot \|^2 = <\cdot, \cdot>$.

Consider an energy functional $F(u)= \|u\|^{2p}  $ for $p \in [1,
\infty)$. In this case, $F_u(u_0) (h) = 2p \|u_0\|^{2p-2} <u_0,
h>$ and
\begin{eqnarray*}
F_{uu}(u_0) (h, k)=2p \|u_0\|^{2p-2} <h, k>   + 4p(p-1)
\|u_0\|^{2p-4} <u_0, h> \; <u_0, k>  \\
=2p \|u_0\|^{2p-2} <h, k>   + 4p(p-1) \|u_0\|^{2p-4} <(u_0\otimes
u_0)h, k>,
\end{eqnarray*}
where $(a\otimes b) h :=a<b, h>$; see \S \ref{tools-Hilbert} or
\cite{DaPrato}, p25.
\end{example}

\textbf{Stochastic product rule}:

Let $u$ and $v$ be solutions of two SPDEs. Then
\begin{eqnarray}
 d (u v )= u  dv  + (du ) v  + du  dv.
\end{eqnarray}

\textbf{Ito isometry}:

\begin{eqnarray}
\EX \|\int_0^t \Phi(t, \om) dW_t \|^2 =\EX \int_0^t
Tr[(\Phi(r)Q^{\frac12}) (\Phi(r)Q^{\frac12})^*] dr.
\end{eqnarray}

\textbf{Generalized Ito isometry}:

\begin{eqnarray}
 & &\EX  \langle\int_0^a F(t,\omega) dW_t, \int_0^b G(t,\omega)
 dW_t\rangle    \nonumber \\
 &=&  \EX \int_0^{a \wedge b}  Tr [ (G(r, \om)Q^{\frac12}) (F(r,
 \om)Q^{\frac12})^*] dr,
\end{eqnarray}
where $a \wedge b = \min(a, b)$.

\subsection{Stochastic partial differential equations}

A general class of SPDEs may be written  as
\begin{eqnarray}
 du_t = [Au + f(u)] dt + G(u) dW_t,
\end{eqnarray}
where $Au$ is the linear part, $f(u)$ is the nonlinear part,
$G(u)$ the noise intensity (usually an   operator),  and $W_t$ a
Brownian motion. When $G$ depends on $u$, $  G(u) dW_t$ is called
a multiplicative noise, otherwise it is an additive noise.

  For general background on SPDEs, such as wellposedness and basic
  properties of solutions, see \cite{DaPrato, Rockner, Roz}.

\section{Correlation}
\label{corr}

In this section, we discuss correlation of solutions, at different
time instants,  of some linear SPDEs. We first recall some
information about Fourier series in Hilbert space.

\subsection{Hilbert-Schmidt theory and Fourier series in Hilbert
space}

A separable Hilbert space $H$ has a countable orthonormal basis
$\{ e_n\}_{n=1}^{\infty}$. Namely, $\langle e_m,
e_n\rangle=\delta_{mn}$, where $\delta_{mn}$ is the Kronecker
delta function. Moreover,for any $h \in H$, we have Fourier series
expansion
\begin{eqnarray}
 h = \sum_{n=1}^{\infty} \langle h, e_n\rangle e_n.
\end{eqnarray}

In the context of solving stochastic PDEs, we may chose to work on
a Hilbert space with an appropriate orthonormal basis. This is
naturally possible with the help of the Hilbert-Schmidt theory
\cite{Zeidler1}, p.232.

The Hilbert-Schmidt theorem (\cite{Zeidler1}, p.232) says that a
linear compact symmetric operator $A$ on a separable Hilbert space
$H$ has a set of eigenvectors that form a complete orthonormal
basis for $H$. Moreover, all the eigenvalues of $A$ are real, each
non-zero eigenvalue has finite multiplicity, and two eigenvectors
that correspond to different eigenvalues are orthogonal.

This theorem applies to a strong (self-adjoint)  elliptic
differential operator  $B$
$$
Bu = \sum_{0 \leq |\a|,|\b|\leq m} (-1)^{|\a|} D^{\a} (a_{\a\b}(x)
D^{\b}u), \; \; x\in D \subset \R^n,
$$
where the domain of definition of $B$ is an appropriate dense
subspace of $H=L^2(D)$, depending on the boundary condition
specified for $u(x)$.

 In this
case, $A:=B^{-1}$ is a linear symmetric compact operator in a
Hilbert space, e.g., $H=L^2(D)$. We may consider $(L+ a I)^{-1}$.
This may be necessary in order for the operator to be invertible,
i.e., no zero eigenvalue, such as in the case of Laplace operator
with zero Neumann boundary condition.

By the Hilbert-Schmidt theorem, eigenvectors (also called
eigenfunctions in this context) of $A =B^{-1}$ form an orthonormal
basis for $H=L^2(D)$. Note that $A$ and $B$ share the same set of
eigenfunctions. So we can claim that the strong elliptic operator
$B$'s eigenfunctions form an orthonormal basis for $H=L^2(D)$.

 In the case of one spatial variable, the elliptic differential operator
is the so called Sturm-Liouville operator \cite{Zeidler1}, p.245.
For example
$$
Bu = -(pu')' + qu,  \; x \in (0, l),
$$
where $p(x)$, $p'(x)$ and $q(x)$ are continuous on $(0, l)$. This
operator arises in the method of separating variables for solving
linear (deterministic) partial differential equations in the next
section. By the Hilbert-Schmidt theorem, eigenfunctions of the
 Sturm-Liouville operator  form an orthonormal basis for $H=L^2(0, l)$.

\subsection{The wave equation with additive noise}

Consider the stochastic wave equation with additive noise:
\begin{eqnarray}
 u_{tt} = c^2 u_{xx}  + \e W_t,\;  0<x<l,\; t>0 \\
 u(0, t)=u(l, t)=0, \\
 u(x,0) = f(x), \;\; u_t(x, 0)=g(x),
\end{eqnarray}
where   $\e$ is a real parameter modeling the noise intensity,
$c>0$ is a constant (wave speed), and $W_t$ is a   Brownian motion
taking values in Hilbert space $H=L^2(0, l)$.

Method of eigenfunction expansion:
\begin{eqnarray}
u = \sum_{n=1}^{\infty} u_n(t) e_n(x),  \label{soln-wave} \\
W_t =\sum_{n=1}^{\infty} \sqrt{q_n} W_n(t)  e_n(x),
\end{eqnarray}
where
$$
e_n(x) = \sqrt{2/l} \; \sin\frac{n\pi x}{l}, \; \la_n =
 (n\pi/l)^2, n=1, 2, \cdots.
$$

Putting these into the SPDE  $u_{tt} = c^2 u_{xx}  + \e W_t$we
obtain
\begin{eqnarray}
  \ddot{u}_n(t) +c^2 \lambda_n u_n = \e \sqrt{q_n} \dot{W}_n(t),
  n=0, 1, 2, \cdots.
 \end{eqnarray}
For each $n$,  this second order SDE may be   solved  by
converting to a linear system of first order SDEs \cite{Oksendal}:
\begin{eqnarray}
  u_n(t)&=& [A_n-\e \frac{l}{cn\pi}
\sqrt{q_n}\int_0^t \sin\frac{cn\pi}{l}s dW_n(s) ] \;
\cos\frac{cn\pi}{l}t  \nonumber \\
& + &  [B_n+\e \frac{l}{cn\pi} \sqrt{q_n}\int_0^t
\cos\frac{cn\pi}{l}s dW_n(s) ] \; \sin\frac{cn\pi}{l}t
 \end{eqnarray}
 with  $A_n$ and $ B_n$ constants.

 The final solution is
\begin{eqnarray}
u(x,t) &=& \sum_{n=1}^{\infty} \{ [A_n-\e \frac{l}{cn\pi}
\sqrt{q_n}\int_0^t \sin\frac{cn\pi}{l}s dW_n(s)   ]
\cos\frac{cn\pi}{l}t  \nonumber \\
& + &  [B_n+\e \frac{l}{cn\pi} \sqrt{q_n}\int_0^t
\cos\frac{cn\pi}{l}s dW_n(s)  ] \sin\frac{cn\pi}{l}t \}e_n(x),
\end{eqnarray}
where the constants $A_n$ and $ B_n$ are determined by the initial
condition as follows
$$
A_n = \langle f, e_n\rangle, \;\;\; B_n =\frac{l}{cn\pi} \langle
g, e_n\rangle.
$$

When the noise is at one mode, say at the first mode $e_1(x)$
(i.e., $q_1>0$ but $q_n=0,\; n=2, 3, \cdots.$), we see that the
solution contains randomness only at that mode. So for the linear
stochastic diffusion system, there is no interactions between
modes. In other words, if we randomly force a few fast modes, then
there is no   impact on slow modes.

\bigskip

\textbf{Mean value} for the solution:

\begin{eqnarray}
 \EX u(x,t)=\sum^{\infty}_{n=1}[A_n\cos({cn\pi t \over l})
 +B_n\sin({cn\pi t\over l})]e_n(x).
\end{eqnarray}

\textbf{Covariance}  for the solution:

Now we calculate the covariance of solution $u$ at different time
instants $t$ and $s$, i.e., $\EX <u(x,t)-\EX u(x,t), u(x,s)-\EX
u(x,s)>$.



Using the Ito's isometry, we   get
\begin{eqnarray*}
 & & \EX <u(x,t)-\EX u(x,t), u(x,s)-\EX u(x,s)>  \\
 &=&  \sum^{\infty}_{n=1}{\epsilon^2
l^2q_n\over c^2n^2 {\pi}^2} \; [\int_0^{t\wedge s}\sin^2{cn\pi r\over l} dr \cos {cn\pi t\over l}\cos {cn\pi s\over l} \\
&+& \int^{t\wedge s}_0 \cos^2{cn\pi r\over l}dr \sin{cn\pi
t\over l} \sin{cn\pi s\over l} \\
&-& \int^{t\wedge s}_0 \sin{cn\pi r\over l} \cos {cn\pi r\over
l}dr ( \cos {cn\pi t\over l}\sin {cn\pi s\over l}+ \cos {cn\pi
s\over l}\sin {cn\pi t\over l}) ]
\end{eqnarray*}
After integrations,   we   get the covariance as
\begin{eqnarray*}
Cov(u(x,t), u(x,s))
&=& \EX <u(x,t)-\EX u(x,t), u(x,s)-\EX u(x,s)> \\
&=& \sum^{\infty}_{n=1}{\epsilon ^2 l^2 q_n\over
 2c^2n^2\pi^2}[({t\wedge s}) \cos{cn\pi (t-s)\over l}  \\
&-& {l\over 2cn\pi} \; \sin{2cn\pi ({t\wedge s}) \over l} \cos
 { cn\pi (t+s)\over l}     \\
&+& {l\over 2cn\pi}\; \cos{2cn\pi ({t\wedge s}) \over l} \sin
 { cn\pi (t+s)\over l}     \\
 &-& {l\over 2cn\pi} \sin{cn\pi (t+s)\over l}] \\
&=&  \sum^{\infty}_{n=1}{\epsilon ^2 l^2 q_n\over
 2c^2n^2\pi^2}[({t\wedge s}) \cos{cn\pi (t-s)\over l}  \\
&+& {l\over 2cn\pi} \; \sin{ cn\pi (t+s-2 ({t\wedge s})) \over l}   \\
 &-& {l\over 2cn\pi} \sin{cn\pi (t+s)\over l}] .
\end{eqnarray*}
In particular, for $t=s$ we   get the variance.

\bigskip

\textbf{Variance} for the solution:

\begin{equation}
 Var (u(x,t)) =\sum^{\infty}_{n=1}{\epsilon^2 l^2q_n\over
2c^2 n^2 \pi^2} [  t-\frac{l}{2cn\pi}\sin (\frac{2cn\pi}{l}t)].
\end{equation}

\bigskip

\textbf{Energy evolution} for the solution:
\begin{equation}
  E(t) = \frac12 \int_0^l [u_t^2 + c^2 u_x^2] dx
\end{equation}
Taking time derivative,
\begin{equation}
  \dot{E}(t) =  \int_0^l  u_t [u_{tt} - c^2 u_{xx}] dx
  = \e \int_0^l u_t(x,t) \dot{W}_t(x) dx.
\end{equation}
Or in integral form,
\begin{eqnarray*}
E(t)&=&E(0) + \e \int_0^t \int_0^l u_s(x,t) dW_s(x) dx\\
&=& E(0) + \e \int_0^l \int_0^t \p_t u(x,s) dW_s(x) dx
\end{eqnarray*}
Thus
\begin{eqnarray}
\EX E(t)=E(0).
\end{eqnarray}

\begin{equation}
 Var(E(t))=\varepsilon^2\; \EX\;(\int_0^l\int_0^t \p_t u(x, s)  dW_s dx)^2,
\end{equation}
where   $W_t$ is in the following form
\begin{eqnarray}
 W_t=W(t) &=&\sum_{n=1}^{\infty} \sqrt{q_n}W_n(t)e_n(x),
\end{eqnarray}
 and   $\p_t u$ can be written as
\begin{eqnarray}
 \p_t u
&=& \sum \{ -A_n \frac{ c n \pi}{l}\sin \frac{ c n \pi t}{l}
 +B_n\frac{ c n\pi }{l}\cos \frac{ c n\pi t}{l}   \nonumber \\
&+& \varepsilon\sqrt{q_n} \; [\int_0^t\sin \frac{ c n\pi s}{l}
\; dW_n(s) \; \sin \frac{c n\pi t}{l}   \nonumber \\
&+& \int_0^t \cos \frac{c n\pi s}{l} dW_n(s) \; \cos \frac{c n \pi
t}{l} ] \} \; e_n(x).
\end{eqnarray}
Set $\frac{cn\pi}{l}=\mu_n$ and rewrite
\begin{eqnarray*}
\partial_tu&=&\sum\{F_n(t)+\varepsilon\sqrt{q_n}[\int_0^t(\sin\mu_ns\cdot\sin\mu_nt+\cos\mu_ns\cdot\cos\mu_nt)dW_n(s)]\}e_n(x)\\
&=&\sum\{F_n(t)+\varepsilon\sqrt{q_n}\int_0^t\cos\mu_n(t-s)dW_n(s)\}e_n(x),
\end{eqnarray*}
where
$$F_n(t):=-A_n\mu_n\sin\mu_nt+B_n\mu_n\cos\mu_nt, \;n=1, 2, \cdots$$
 For the simplicity of notations, set
 $$G_n(t):=F_n(t)+\varepsilon\sqrt{q_n}\int_0^t\cos\mu_n(t-s)dW_n(s), \;n=1, 2, \cdots$$
 then we have
 $$\partial_tu=\sum G_n(t)e_n(x).$$
 Thus
 \begin{eqnarray*}
 \mathbb{E}(\int_0^l\int_0^t \p_t u(x, s)dW_sdx)^2
&=&\mathbb{E}[\int_0^l(\sum\limits_{n=1}^\infty
\sqrt{q_n}e_n(x)\int_0^tu_sdW_n(s))dx]^2\\
&=&\mathbb{E}[\sum\limits_{n=1}^\infty\sqrt{q_n}\int_0^l\int_0^tu_se_n(x)dW_n(s)dx]^2\\
&=&\mathbb{E}[\sum\limits_{n=1}^\infty\sqrt{q_n}\int_0^t(\int_0^le_n(x)\sum\limits_{j=1}^\infty
G_j(s)e_j(x)dx)dW_n(s)]^2\\
&=&\mathbb{E}[\sum\limits_{n=1}^\infty\sqrt{q_n}\int_0^t(\sum\limits_{j=1}^\infty
G_j(s)\int_0^le_n(x)e_j(x)dx)dW_n(s)]^2\\
&=&\mathbb{E}[\sum\limits_{n=1}^\infty\sqrt{q_n}\int_0^tG_n(s)dW_n(s)]^2\\
\end{eqnarray*}
\begin{eqnarray*}
&=&\sum\limits_{n=1}^\infty q_n \; \mathbb{E}\int_0^tG_n^2(s)ds\\
&=& \sum\limits_{n=1}^\infty
q_n \; \mathbb{E} \int_0^t[F_n(s)+\varepsilon\sqrt{q_n}\int_0^s\cos\mu_n(s-r)dW_n(r)]^2ds\\
&=&\sum\limits_{n=1}^\infty q_n \int_0^t
F_n^2(s)ds+ \mathbb{E}\sum\limits_{n=1}^\infty\varepsilon^2q_n^2\int_0^t[\int_0^s\cos\mu_n(s-r)dW_n(r)]^2ds\\
&=&\sum\limits_{n=1}^\infty q_n\int_0^t
F_n^2(s)ds+\sum\limits_{n=1}^\infty\varepsilon^2q_n^2\int_0^t[\int_0^s\cos^2\mu_n(s-r)dr]ds\\
&=&\sum\limits_{n=1}^\infty
q_n[A_n^2\mu_n^2(\frac{t}{2}-\frac{1}{4\mu_n}\sin2\mu_nt)+B_n^2\mu_n^2(\frac{t}{2}+\frac{1}{4\mu_n}\sin2\mu_nt)\\
&-&\frac{1}{2}A_nB_n\mu_n(1-\cos2\mu_nt)]
 + \sum\limits_{n=1}^\infty\varepsilon^2q_n^2[\frac{t^2}{4}+\frac{1}{8\mu_n^2}(1-\cos2\mu_nt)].
\end{eqnarray*}
Therefore,
\begin{eqnarray*}
Var(E(t))&=& \sum\limits_{n=1}^\infty\varepsilon^2
q_n[A_n^2\mu_n^2(\frac{t}{2}-\frac{1}{4\mu_n}\sin2\mu_nt)+B_n^2\mu_n^2(\frac{t}{2}+\frac{1}{4\mu_n}\sin2\mu_nt)\\
&-&\frac{1}{2}A_nB_n\mu_n(1-\cos2\mu_nt)]
  +
  \sum\limits_{n=1}^\infty\varepsilon^4q_n^2[\frac{t^2}{4}+\frac{1}{8\mu_n^2}(1-\cos2\mu_nt)].
\end{eqnarray*}


\subsection{The   diffusion equation with
multiplicative noise}

Consider   the stochastic Diffusion equations with zero Dirichlet
boundary condition
\begin{eqnarray}
 u_t&=& u_{xx}+\epsilon u \dot{w}_t, \; 0<x<1,  \label{diffu}\\
 u(x, 0)&=&f(x),
\end{eqnarray}
where $w_t$ is a scalar Brownian motion. We take Hilbert space
$H=L^2(0, 1)$ with an orthonormal basis ${e_n=\sqrt{2}\sin(n\pi
x)}$.  We use the method of eigenfunction expansion:
\begin{equation}
u(x,t)=\sum u_n(t)e_n(x),
\end{equation}
\begin{equation}
 u_{xx}=\sum u_n(t)\ddot{e}_n(x) =\sum -u_n(t) (n\pi)^2 e_n(x).
\end{equation}
 Putting these into the above SPDE \eqref{diffu}, with $\lambda_n= (n\pi)^2$,
 we   get
\begin{equation}
\sum\dot{u}_n(t) e_n(x)=\sum u_n(t)(-\lambda_n)e_n+\epsilon \sum
u_n(t)e_n(x)\dot{w}_t.
\end{equation}
We further obtain the following system of SODEs:
\begin{equation}
 du_n(t)=-\lambda_n u_n(t)+\epsilon u_n(t)dw(t), \; n=1, 2, 3,
 \cdots.
\end{equation}
Thus
\begin{equation}
u_n(t)=u_n(0) \exp((-\lambda_n-\frac12\epsilon^2)t+\epsilon w(t)),
\end{equation}
where $u_0=\Sigma <u_0,e_n(x)>e_n(x)$. Therefore, the final
solution is:
\begin{equation}
u(x,t)=\sum a_n e_n(x) \exp(b_n t+\epsilon w_t),
\end{equation}
with $a_n=<f(x),e_n(x)>$ and $b_n=(-\lambda_n- \frac12
 \epsilon^2)$.

Note that $\EX \exp(b_n t+\epsilon w_t)=\exp(b_n t) \EX
\exp(\epsilon w_t) = \exp(b_n t) \exp(\frac12 \epsilon^2 t)=\exp
(-\lambda_n t)$. Therefore, we can find out the mean, variance,
covariance and correlation
 of the solution:
\begin{eqnarray}
 E(u(x,t))=\sum a_n e_n(x) \exp(-\lambda_n t).
\end{eqnarray}

\begin{eqnarray}
 Var(u(x,t))=\EX \langle u(x,t)-E(u(x,t)),u(x,t)-E(u(x,t)) \rangle
 \nonumber \\
 =\sum  a_n^2 \exp(-2\lambda_n t ) [\exp(\epsilon^2 t)-1].
\end{eqnarray}

For $\tau\leq t$, we have
\begin{eqnarray*}
\mathbb{E}\exp\{\epsilon(w_t+w_\tau)\}&=&\mathbb{E}\exp\{\epsilon(w_t-w_\tau)+2\epsilon
w_\tau\}\\
&=&\mathbb{E}\exp\{\epsilon(w_t-w_\tau)\}\cdot\mathbb{E}\exp\{2\epsilon
w_\tau\}\\
&=&\exp\{\frac{1}{2}\epsilon^2(t-\tau)\}\cdot\exp\{2\epsilon^2\tau\}\\
&=&\exp\{\frac{1}{2}\epsilon^2[(t+\tau)+2(t\wedge\tau)]\}.
\end{eqnarray*}
therefore, by direct calculation, we can get
\begin{eqnarray*}
Cov(u(x,t), u(x,\tau))&=&\sum
a_n^2\{\exp(b_n(t+\tau)+\frac{1}{2}\epsilon^2((t+\tau)+2(t\wedge\tau)))+\exp(-\lambda_n(t+\tau))\\
&-&\exp(-\lambda_n\tau+b_nt+\frac{1}{2}\epsilon^2t)-\exp(-\lambda_nt+b_n\tau+\frac{1}{2}\epsilon^2\tau)\}\\
&=&\sum
a_n^2\exp\{-\lambda_n(t+\tau)\}[\exp\{\epsilon^2(t\wedge\tau)\}-1]
\end{eqnarray*}
and
\begin{eqnarray*}
& & Corr(u(x,t), u(x,\tau))  \nonumber \\
&=&\frac{Cov(u(x,t),u(x,\tau))}{\sqrt{Var(u(x,t))} \;
\sqrt{Var(u(x,\tau))}}
\nonumber \\
&=& \frac{\sum
a_n^2\exp\{-\lambda_n(t+\tau)\}[\exp\{\epsilon^2(t\wedge\tau)\}-1]}{\sqrt{\sum
a_n^2 \exp(-2\lambda_n t ) [\exp(\epsilon^2 t)-1]} \; \sqrt{\sum
a_n^2 \exp(-2\lambda_n\tau )[\exp(\epsilon^2\tau)-1]}}.
\end{eqnarray*}



\section{Lyapunov Exponents}
\label{liapunov}

Lyapunov exponents are tools for quantifying growth or decay of
linear systems (e.g., PDEs or SPDEs). The following discussions
are from   \cite{CL, Kwiec}.

\subsection{A deterministic PDE system}

Let us first look at the following deterministic PDE:
\begin{equation}\label{eq:pde}
    \frac{\partial u}{\partial t} =  u_{xx} + \alpha u,
\end{equation}
\begin{equation}\label{eq:pdeIC}
    u(0,x) = f(x),
\end{equation}
\begin{equation}\label{eq:pdeBC}
    u(t,x) = 0,  \;  x \in \partial D
\end{equation}
where $ D = \{x: 0 \leq x \leq 1\} $ and the function $ f \in
L^{2}(0,1)$. An orthonormal basis for $L^{2}(0,1)$ is $ \{
e_{n}(x) \}, n=0,1,2,\ldots, \p_{xx} e_{j} = - \lambda_{j}e_{j},
$. Note that $0 \leq \lambda_{j} \uparrow \infty. $

We then can write:
\begin{equation}\label{eq:expf}
   f = \sum_{j=0}^{\infty} f_{j} e_{j}, \; \text{where} \;  f_{j} = \langle f,
   e_{j} \rangle.
\end{equation}
By using the method of eigenfunction expansion, it is known that
the unique solution to the problem is given below:
\begin{equation}\label{eq:solu}
u(t,x) = \sum_{j=0}^{\infty} \exp(t(-\lambda_{j} +
\alpha))f_{j}e_{j}(x), \;  t \geq 0.
\end{equation}

\begin{theorem}\label{thm1}
Let us fix an initial condition $f, f \neq 0$. Let $j_{0}$ be the
smallest integer $j \geq 0$ in the expansion \eqref{eq:expf} of
$f$ such that $f_{j_{0}} \neq 0$. Then the Lyapunov exponent of
the system $ \eqref{eq:pde} - \eqref{eq:pdeBC} $ exists as a limit
and is given by
\begin{equation}
   \lambda^{u}(f) = -\lambda_{j_{0}} + \alpha.
\end{equation}
\end{theorem}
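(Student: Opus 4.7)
My plan is to compute the $L^2$-norm of the explicit solution given in \eqref{eq:solu} and then extract its exponential growth rate by a direct comparison argument, factoring out the dominant mode $j_0$.

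First, I recall that the Lyapunov exponent of a solution of a linear evolution equation is defined by
\begin{equation*}
\lambda^u(f) = \lim_{t\to\infty} \frac{1}{t} \log \|u(t,\cdot)\|,
\end{equation*}
whenever the limit exists (otherwise one takes $\limsup$). Since the eigenfunctions $\{e_j\}$ form an orthonormal basis of $L^2(0,1)$, Parseval's identity applied to \eqref{eq:solu} yields
\begin{equation*}
\|u(t,\cdot)\|^2 = \sum_{j=0}^\infty \exp\bigl(2t(-\lambda_j + \alpha)\bigr) f_j^2.
\end{equation*}
Since $f_j = 0$ for $j < j_0$ by the definition of $j_0$, only the terms $j \geq j_0$ contribute. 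I then factor out the slowest-decaying exponential:
\begin{equation*}
\|u(t,\cdot)\|^2 = \exp\bigl(2t(-\lambda_{j_0} + \alpha)\bigr) \; S(t), \qquad S(t) := \sum_{j=j_0}^\infty \exp\bigl(2t(\lambda_{j_0} - \lambda_j)\bigr) f_j^2.
\end{equation*}

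The key step is then to sandwich $S(t)$ between two positive constants independent of $t$. Since $\lambda_{j_0} \leq \lambda_j$ for all $j \geq j_0$ (because $\lambda_j \uparrow \infty$), each factor $\exp(2t(\lambda_{j_0}-\lambda_j))$ lies in $(0,1]$ for $t \geq 0$. Hence on the one hand $S(t) \geq f_{j_0}^2 > 0$ (keeping only the $j=j_0$ term), and on the other hand $S(t) \leq \sum_{j\geq j_0} f_j^2 \leq \|f\|^2 < \infty$. Taking logarithms, dividing by $t$, and letting $t \to \infty$ gives
\begin{equation*}
\lim_{t\to\infty} \frac{1}{t} \log \|u(t,\cdot)\| = (-\lambda_{j_0} + \alpha) + \lim_{t\to\infty} \frac{1}{2t} \log S(t) = -\lambda_{j_0} + \alpha,
\end{equation*}
since the sandwich forces $\frac{1}{2t}\log S(t) \to 0$.

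The only mildly delicate point is verifying that the boundedness of $S(t)$ is indeed enough to conclude: both bounds are strictly positive and finite, so $\log S(t)$ stays in a bounded interval of $\mathbb{R}$, and the division by $t$ kills it in the limit. One could alternatively prove this via dominated convergence, showing $S(t) \to f_{j_0}^2$ as $t \to \infty$ (the dominating summable sequence is $\{f_j^2\}$, which is summable because $f\in L^2$), but the sandwich above is the shortest path and avoids any subtlety about swapping limit and sum. Because the limit exists (not merely the $\limsup$), the conclusion matches the statement exactly.
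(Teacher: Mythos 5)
Your proof is correct and follows essentially the same route as the paper: both arguments use Parseval's identity together with the monotonicity $\lambda_j \geq \lambda_{j_0}$ to sandwich $\|u(t)\|$ between $|f_{j_0}|\exp(t(-\lambda_{j_0}+\alpha))$ and $\|f\|\exp(t(-\lambda_{j_0}+\alpha))$, so that the $\frac{1}{t}\log$ of the constant factors vanishes in the limit. Your write-up is in fact slightly cleaner, since it states explicitly that the two-sided bound forces the limit (not just the $\limsup$) to exist.
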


\begin{proof}
For a class of initial conditions $f$ we calculate the Lyapunov
exponents, which are defined as

\begin{equation}
   \lambda^{u}(f) =  \limsup_{t\to\infty} \frac{1}{t} \log \|u(t)\|_{L^{2}}.
\end{equation}

By applying  \eqref{eq:solu}, we obtain the Lyapunov exponents
regarding to PDE system  $ \eqref{eq:pde}  -  \eqref{eq:pdeBC} $,
\begin{equation}
   \lambda^{u}(f) =  \limsup_{t\to\infty} \frac{1}{t} \log
   \left\|\sum_{j=0}^{\infty} \exp(t(-\lambda_{j} + \alpha))f_{j}e_{j}(x)\right\|.
\end{equation}
On the one hand,
\begin{eqnarray}
    \frac{1}{t} \log \left\| \sum_{j=0}^{\infty} \exp(t(-\lambda_{j} + \alpha))
    f_{j}e_{j}(x)\right\| & \leq &  \frac{1}{t} \log \left(\sum_{j=j_{0}}^{\infty} |
    \exp(t(-\lambda_{j_{0}} + \alpha))f_{j}|^{2}\right)^{1/2} \nonumber \\
    & = & -\lambda_{j_{0}} + \alpha + \frac{1}{t} \log \|f\|.
\end{eqnarray}
On the other hand,
\begin{eqnarray}
    \frac{1}{t} \log \left\| \sum_{j=0}^{\infty} \exp(t(-\lambda_{j} + \alpha))
    f_{j}e_{j}(x)\right\| & \geq &  \frac{1}{t} \log |
    \exp(t(-\lambda_{j-{0}} + \alpha))f_{j_{0}}| \nonumber \\
    & = &  -\lambda_{j_{0}} + \alpha + \frac{1}{t} \log |f_{j_{0}}|.
\end{eqnarray}
\end{proof}

\subsection{A SPDE system}

We now consider the following SPDE
\begin{equation}
    dv  =  ( v_{xx} + \beta v )dt + \gamma v \; dw_{t}, \label{eq:spde}
\end{equation}
\begin{equation}
    v(0,x,\omega) = f(x),   x \in D \label{eq:spdeIC}
\end{equation}
\begin{equation}
    v(t,x,\omega) = 0, \;   x \in \partial D, \label{eq:spdeBC}
\end{equation}
where $w_t$ is a scalar Brownian motion.  The conditions
\eqref{eq:spdeIC} and \eqref{eq:spdeBC} hold for a. a. $\omega \in
\Omega$.

We seek the solution  with expansion with respect to the basis
$\{e_{j}\}$ (see the last subsection)
\begin{equation}\label{eq:expsol}
   v(t,x) = \sum_{j=0}^{\infty} y_{j}(t) e_{j}(x),
\end{equation}
where $y_{j}(t)$, for $ j=0,1,2,\ldots $ satisfy the following
stochastic ordinary differential equations:
\begin{eqnarray}
    dy_{j}(t) & = & ( - \lambda_{j} + \beta )y_{j}(t)dt + \gamma y_{j}(t)
    dw_{t}, \label{eq:nspde}\\
    y_{j}(0) & = & f_{j}.  \nonumber
\end{eqnarray}
So
\begin{equation}
    y_{j}(t) = \exp(\gamma w_{t} ) \exp \left(\left( - \lambda_{j} +
    \beta - \frac{1}{2}\gamma^{2} \right)t \right) f_{j}. \label{eq:coef}
\end{equation}
Thus from  \eqref{eq:expsol}, we obtain,
\begin{equation}
    v(t,x) =  \sum_{j=0}^{\infty} \exp(\gamma w_{t} ) \exp \left(\left( - \lambda_{j} +
    \beta - \frac{1}{2}\gamma^{2} \right)t \right) f_{j}e_{j}. \label{eq:sol1}
\end{equation}
Observe that
\begin{equation}
    v(t,x) = \exp(\gamma w_{t} ) \exp \left(\left( (\beta
    - \alpha) \frac{1}{2}\gamma^{2} \right)t \right)u(t,x), \label{eq:so2}
\end{equation}
where $u(t,x)$ is the solution to the above deterministic PDE
\eqref{eq:pde} - \eqref{eq:pdeBC}.

By  \eqref{eq:so2}, we can calculate the Lyapunov exponent of the
stochastic system $ \eqref{eq:spde} -  \eqref{eq:spdeBC} $ as a
function of the Lyapunov exponent of the deterministic system $
\eqref{eq:pde}  -  \eqref{eq:pdeBC} $ as follows:
\begin{eqnarray}
    \lambda^{v}(f) & = & \limsup_{t\to\infty} \frac{1}{t} \log
    \|v(t)\| \nonumber\\
    & = & \limsup_{t\to\infty} \frac{1}{t} \log \left \|
    \exp(\gamma w_{t} ) \exp \left(\left((\beta - \alpha)-
    \frac{1}{2}\gamma^{2} \right)t \right)u(t) \right\| \nonumber \\
    & = & \lambda^{u}(f) + (\beta - \alpha)-
    \frac{1}{2}\gamma^{2}, \;  a.s.
\end{eqnarray}
  by the strong law of large number.

Let us state the   result in the following  theorem.
\begin{theorem}\label{thm2}
Let $f \neq 0$. Then the Lyapunov exponent of the   SPDE   $
\eqref{eq:spde}  -  \eqref{eq:spdeBC} $ almost surely exists as a
limit, is non-random and is given in the following formula:
\begin{equation}
    \lambda^{v}(f) = \lambda^{u}(f) + (\beta - \alpha)-
    \frac{1}{2}\gamma^{2}, \; a.s.
\end{equation}
\end{theorem}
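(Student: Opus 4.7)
The plan is to exploit the explicit factorization relating $v(t,x)$ to $u(t,x)$, namely
$$
v(t,x) \;=\; \exp(\gamma w_t)\,\exp\!\Bigl(\bigl(\beta - \alpha - \tfrac12\gamma^2\bigr)t\Bigr)\,u(t,x),
$$
which follows from comparing the modewise expansions in \eqref{eq:sol1} and \eqref{eq:solu}: the ratio $v_j(t)/u_j(t) = \exp(\gamma w_t)\exp((\beta-\alpha-\tfrac12\gamma^2)t)$ is independent of $j$, so it factors out of the sum. I would first verify this factorization cleanly, since the scalar prefactor carries no spatial dependence and is the entire source of randomness in $v$.

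Next I would apply the $L^2(D)$-norm. Because the prefactor is independent of $x$, it comes outside the norm:
$$
\|v(t)\|_{L^2} \;=\; \exp(\gamma w_t)\,\exp\!\Bigl(\bigl(\beta - \alpha - \tfrac12\gamma^2\bigr)t\Bigr)\,\|u(t)\|_{L^2}.
$$
Taking logarithms and dividing by $t$ gives
$$
\frac{1}{t}\log\|v(t)\|_{L^2} \;=\; \frac{\gamma w_t}{t} \;+\; \bigl(\beta - \alpha - \tfrac12\gamma^2\bigr) \;+\; \frac{1}{t}\log\|u(t)\|_{L^2}.
$$

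The third step is passage to the limit as $t\to\infty$. For the first term I invoke the strong law of large numbers for Brownian motion, $w_t/t\to 0$ almost surely. For the third term I use Theorem \ref{thm1}: its proof actually sandwiches $\frac{1}{t}\log\|u(t)\|_{L^2}$ between two quantities both tending to $-\lambda_{j_0}+\alpha = \lambda^u(f)$, so the $\limsup$ is in fact a genuine limit. Adding the three contributions and noting that the middle term is a deterministic constant, I obtain
$$
\lim_{t\to\infty}\frac{1}{t}\log\|v(t)\|_{L^2} \;=\; \lambda^u(f) + (\beta-\alpha) - \tfrac12\gamma^2 \quad\text{a.s.},
$$
which is the claimed formula and shows that $\lambda^v(f)$ exists as an almost sure limit (not merely a $\limsup$) and is non-random.

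The only subtle point—the ``main obstacle'', such as it is—is making precise that the limit (rather than the $\limsup$) exists. This requires reading Theorem \ref{thm1} slightly more strongly than stated: one must observe that both the upper and lower bounds in its proof share the same leading term $-\lambda_{j_0}+\alpha$ with a vanishing $O(t^{-1})$ correction, and that $w_t/t\to 0$ holds almost surely (which pins down the exceptional null set on which the identity could fail). Once these two convergences are in hand the theorem is immediate from the factorization.
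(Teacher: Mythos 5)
Your proposal is correct and follows essentially the same route as the paper: the modewise factorization $v(t,x)=\exp(\gamma w_t)\exp\bigl((\beta-\alpha-\tfrac12\gamma^2)t\bigr)u(t,x)$, pulling the spatially constant random prefactor out of the $L^2$ norm, and the strong law of large numbers $w_t/t\to 0$ a.s.\ combined with Theorem \ref{thm1}. Your added observation that the two-sided bounds in the proof of Theorem \ref{thm1} upgrade the $\limsup$ to a genuine limit is a point the paper asserts but does not spell out, and it is handled correctly here.
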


\begin{remark}
Let us consider a special case when $ \alpha = \beta $. Then by
the above theorem, for a fixed initial condition $f$, the Lyapunov
exponent of the stochastic system $ \eqref{eq:spde} -
 \eqref{eq:spdeBC} $ is
\begin{equation}
    \lambda^{v}(f) = \lambda^{u}(f)- \frac{1}{2}\gamma^{2},
\end{equation}
which obviously is smaller than the Lyapunov exponent of the
 corresponding deterministic system $ \eqref{eq:pde} -
 \eqref{eq:pdeBC} $. The result implies that this stochastically perturbed system
is more stable than the original deterministic system.
\end{remark}

\section{Impact of Uncertainty }
\label{impact}

In this section, we first recall some inequalities for estimating
solutions of SPDEs, and then we estimate the impact of noises on
solutions of the nonlinear Burgers equation.

\subsection{Differential and integral inequalities}

\textbf{Gronwall inequality}: Differential form \cite{Tem97}

 Assuming that $y(t)\geq 0$, $g(t)$ and $h(t)$
are integrable, if $\frac{dy}{dt}\leq g(t) y +h(t)$ for $t \geq
t_0$, then
\begin{equation*}
y(t)\leq y(t_0)e^{\int_{t_0}^t g(\tau)d\tau} +\int_{t_0}^t h(s)
[e^{ \int_s^t g(\tau)d\tau}] ds, \;\; t \geq t_0.
\end{equation*}

In particular, if $\frac{dy}{dt}\leq g y +h$ for $t \geq t_0$ with
$g, h$ being constants and $t_0=0$, we have
\begin{equation*}
y(t)\leq y( 0)e^{g t} - \frac{h}{g} (1- e^{ gt}), \;\; t \geq 0.
\end{equation*}
Note that when constant $g<0$, then $\lim_{t\to \infty} y(t)=-
\frac{h}{g}$.

\medskip

\textbf{Gronwall inequality}: Integral form \cite{C/L, GH}

If $u(t), v(t)$ and $c(t)$ are all non-negative, $c(t)$ is
differentiable,  and $v(t)  \leq c(t) + \int_0^t u(s) v(s)ds$ for
$t \geq t_0$, then
\begin{equation*}
v(t)\leq v(t_0)e^{\int_{t_0}^t u(\tau)d\tau } +\int_{t_0}^t c'(s)[
e^{ \int_s^t u(\tau)d\tau}] ds, \;\; t \geq t_0.
\end{equation*}

In particular, assuming that $y(t)\geq 0$ and is continuous and $
y(t) \leq C + K \int_0^t y(s)ds$, with $C, K$ being positive
constants, for $t>0$. Then
\begin{equation*}
y(t)\leq  C e^{Kt}, \;\; t \geq 0.
\end{equation*}

\subsection{Sobolev inequalities}

We first introduce some common Sobolev spaces.
 For $k=1, 2, \cdots$, we define
 $$H^k (0, l) := \{f: \; f, f', \cdots, f^{(k)} \in L^2(0,l)  \}$$
Each of these is a Hilbert spaces with the scalar product
$$
\langle u, v\rangle_k = \int_0^l [uv+u'v'+\cdots +u^{(k)}v^{(k)}]
dx,
$$
and the norm
$$
\|u\|_k = \sqrt{\langle u, u\rangle_k} = \sqrt{\int_0^l [u^2+
(u')^2+\cdots + (u^{(k)})^2] dx}.
$$

For $k=1, 2, \cdots$ and $p \geq 1$, we further define another
class of  Sobolev spaces
$$
 W^{k, p}(D) =\{u: \; u, Du, \cdots, D^{\a}u \in L^p(D), |\a| \leq k
 \},
$$
with norm
$$
\|u\|_{k,p} =(\|u\|^p_{L^p} + \|u'\|^p_{L^p}+\cdots +
\|u^{(k)}\|^p_{L^p})^{\frac1{p}}.
$$
Moreover, $H_0^k(0, l)$ denotes the closure of $C_c^{\infty}(0,
l)$ in $H^k(0,l)$ (i.e., under the norm $\|\cdot\|_k$). It is a
sub-Hilbert space in $H^k(0,l)$. Similarly, $W_0^{k, p}(0, l)$
denotes the closure of $C_c^{\infty}(0, l)$ in $W^{k, p}(0,l)$
(i.e., under the norm $\|\cdot\|_{k,p}$). It is a sub-Hilbert
space in $W^{k, p}(0,l)$.

Standard abbreviations $L^2$ $=$ $L^2(D)$, $H^k_0$ $=$ $H^k_0(D)$,
$k = 1, 2,\ldots$,  are used for the common Sobolev spaces in
fluid mechanics, with $<\cdot, \cdot>$ and $\| \cdot \|$ denoting
the  usual (spatial) scalar product and norm, respectively, in
$L^2(D)$:
$$
<f, g>:=\int_D fg dxdy, \;\;\;\;\;\;  \|f\|:=\sqrt{<f, f>}
=\sqrt{\int_D f(x,y)dxdy}.
$$

\textbf{Cauchy-Schwarz inequality: }

In the   space $L^2(D)$ of square-integrable   functions defined
on a domain $D \subset \R^n$:
\begin{equation*}
 |\int_D f(x) g(x) dx|  \leq \sqrt{\int_D f^2(x)dx }
\; \sqrt{\int_D g^2(x)dx}.
\end{equation*}

\textbf{H\"older inequality}:

In the   space $L^r(D)$ of
     functions defined on a domain $D \subset \R^n$:
\begin{equation*}
 |\int_D f(x) g(x) dx|  \leq (\int_D |f(x)|^p dx)^{\frac1{p}}
\; (\int_D |g(x)|^q dx)^{\frac1{q}}.
\end{equation*}

\textbf{Minkowski inequality: }

In the   space $L^p(D)$ of
     functions defined on a domain $D \subset \R^n$:
\begin{equation*}
 (\int_D |f(x) \pm g(x)|^p dx)^{\frac1{p}}   \leq (\int_D |f(x)|^p dx)^{\frac1{p}}
+ (\int_D |g(x)|^p dx)^{\frac1{p}}.
\end{equation*}

\textbf{Poincar\'e inequality} \cite{Tem97}:

For $g \in H^1_0(D)$,
\begin{eqnarray*}
\|g\|^2 = \int_D g^2(x,y) \,dxdy \leq  \frac{|D|}{\pi} \int_D
|\nabla g|^2 \,dxdy = \frac{|D|}{\pi} \|\nabla g\|^2 ,
\end{eqnarray*}
where $|D|$ is the Lebesgue measure of the domain $D$.

 For $u\in W^{1, p}_0(D)$, $1 \leq p < \infty$ and
$D\subset \R^n$ a bounded domain
\begin{eqnarray*}
\|u\|_p    \leq  C \| \n u\|_p,
\end{eqnarray*}
where $C$ is a positive constant depending only on the domain $D$.

 Let $u\in W^{1, p}(D)$, $1 \leq p < \infty$ and
$D\subset \R^n$ a bounded convex domain. Let $S \subset D$ be a
measurable subset, and define the spatial average of $u$ over $S$
by $ u_S = \frac1{|S|} \int_D u dx$ (here $|S|$ is the volume or
Lebesgue measure of $S$). Then
\begin{eqnarray*}
\|u -u_S\|_p    \leq  C \| \n u\|_p,
\end{eqnarray*}
where $C$ is a positive constant depending only on the domain $D$
and $S$.

\textbf{Agmon inequality} \cite{Tem97}:

 Let $D \subset \R^n$. There exists a constant $C$
depending only on domain $D$ such that
\begin{eqnarray*}
\|u\|_{L^{\infty}(D)} & \leq &  C \|
u\|^{\frac12}_{H^{\frac{n-1}2}(D)}
 \;\| u\|^{\frac12}_{H^{\frac{n+1}2}(D)},\;  \mbox{for  n  odd}, \\
\|u\|_{L^{\infty}(D)} & \leq &
\|u\|^{\frac12}_{H^{\frac{n-2}2}(D)} \;
\|u\|^{\frac12}_{H^{\frac{n+2}2}(D)},\;  \mbox{for  n  even}.
\end{eqnarray*}

In particular, for $n=1$ and $u \in H^1 (0, l)$,
\begin{eqnarray*}
\|u\|_{L^{\infty}(0, l)} \leq C \| u\|^{\frac12}_{L^2 (0, l)}
 \;\| u\|^{\frac12}_{H^1 (0, l)}.
\end{eqnarray*}

Moreover, for $n=1$ and $u \in H^1_0 (0, l)$,
\begin{eqnarray*}
\|u\|_{L^{\infty}(0, l)} \leq C \| u\|^{\frac12}_{L^2 (0, l)}
 \;\| u_x\|^{\frac12}_{L^2 (0, l)}.
\end{eqnarray*}

\subsection{Stochastic Burgers equation }

 We now consider the Burgers equation with additive   noise
 forcing as in \cite{BlomkerDuan}:
\begin{equation}
  \partial_t u +u\cdot \partial_x u =\nu \partial_x^2 u + \sigma\dot{W}_t
\end{equation}
\begin{equation}
    u(\cdot,0) = 0,
  \qquad   u(\cdot,l) =0, \qquad u(x,0)=u_0(x),
\end{equation}
where $W_t$ is a Brownian motion, with covariance $Q$, taking
values in the Hilbert space $L^2(0, l)$ with the usual scalar
product $\langle \cdot, \cdot \rangle$. We assume that the trace
$Tr (Q)$ is finite. So $\dot{W}_t$ is noise colored in space but
white in time.

Taking $F(u)=\frac12 \int_0^l u^2 dx =\frac12 \langle u, u
\rangle$ and applying the Ito's formula, we obtain
\begin{eqnarray}
   \frac12 d\|u\|^2 = \langle u, \sigma dW_t \rangle
   + [\langle u, \nu u_{xx}-uu_x \rangle + \frac12 \sigma^2\; l \;Tr(Q)] dt.
\end{eqnarray}
Thus
\begin{eqnarray}
    \ddt \EX  \|u\|^2
&=&  2 \EX \langle u, \nu u_{xx}-uu_x \rangle +  \sigma^2 \; l\;  Tr(Q) \nonumber  \\
&=& -2 \nu  \EX \|u_x\|^2  +  \sigma^2\; l\; Tr(Q).
\end{eqnarray}
By the Poincare inequality $\|u\|^2 \leq  c\|u_x\|^2$ for some
positive constant depending only on the interval $(0, l)$, we have
\begin{eqnarray}
    \ddt \EX \|u\|^2  \leq  -\frac{2\nu}{c} \EX \|u\|^2   +  \sigma^2\; l\;  Tr(Q).
\end{eqnarray}
Then using the Gronwall inequality, we finally get
\begin{eqnarray} \label{square-estimate}
      \EX \|u\|^2  \leq \EX \|u_0\|^2 e^{-\frac{2\nu}{c}t}
        +  \frac12 c \; \sigma^2 l \; Tr(Q) [1 - e^{-\frac{2\nu}{c}t}].
\end{eqnarray}
Note that the first term in this estimate involves the initial
data, and the second term involves   the noise intensity $\sigma$
as well as the trace of the noise covariance.

\bigskip

We finally consider the Burgers equation with multiplicative noise
forcing.

\begin{equation}
  \partial_t u +u\cdot \partial_x u =\nu \partial_x^2 u + \sigma u
  \dot{w}_t,
\end{equation}
with the same boundary condition and initial condition as above,
where $w_t$ is a scalar Brownian motion (e.g.,  with covariance
$Q=1$ and the trace $Tr (Q)=1$). So $\dot{W}_t$ is noise
homogeneous in space but white in time.

By the Ito's formula, we obtain
\begin{eqnarray}
   \frac12 d\|u\|^2 = \langle u, \sigma u dw_t \rangle
   + [\langle u, \nu u_{xx}-uu_x \rangle + \frac12 \sigma^2 \|u\|^2] dt.
\end{eqnarray}
Thus
\begin{eqnarray}
    \ddt  \EX \|u\|^2
&=&  2 \EX \langle u, \nu u_{xx}-uu_x \rangle +  \sigma^2 \EX \|u\|^2  \nonumber  \\
&=& -2 \nu \EX \|u_x\|^2  +  \sigma^2 \EX \|u\|^2  \nonumber \\
&\leq &  (\sigma^2-\frac{2\nu}{c}) \EX \|u\|^2.
\end{eqnarray}
Therefore,
\begin{eqnarray}
    \EX \|u\|^2   \leq  \EX \|u_0\|^2 e^{(\sigma^2-\frac{2\nu}{c}) t} .
\end{eqnarray}

Note here that the multiplicative noise affects the mean energy
growth or decay rate, while the additive noise affects the mean
energy upper bound.

\subsection{Likelihood for staying bounded}

By the Chebyshev inequality, we can estimate the likelihood of
solution orbits staying inside or outside a bounded domain in
Hilbert space $H=L^2(0, l)$. Taking the bounded domain as a ball
centered at the origin with radius   $\delta
>0$. For example, for the above   Burgers equation with multiplicative noise,
we have
\begin{eqnarray}
  \PX (\om:  \|u\| \geq \delta) &\leq & \frac1{\delta^2}  \EX \|u\|^2
  \nonumber  \\
  & \leq &  \frac{\EX \|u_0\|^2}{\delta^2} \; e^{(\sigma^2-\frac{2\nu}{c}) t} .
\end{eqnarray}
and
\begin{eqnarray}
  \PX (\om:  \|u\| < \delta) &=& 1-\PX (\om:  \|u\| \geq \delta)
  \nonumber \\
   &\geq &  1-\frac{\EX \|u_0\|^2}{\delta^2} \; e^{(\sigma^2-\frac{2\nu}{c}) t} .
\end{eqnarray}

{\bf Acknowledgements.} I would like to thank Hongbo Fu and Jiarui
Yang for helpful comments. This work was partly supported by the
NSF Grants 0542450 and 0620539.


\end{document}